\newtheorem{thm}{Theorem}[section]
\newtheorem{lem}[thm]{Lemma}
\theoremstyle{definition}
\newtheorem{defn}[thm]{Definition}
\newtheorem{exmp}[thm]{Example}
\theoremstyle{remark}
\newtheorem{rem}[thm]{Remark}
\numberwithin{equation}{section}
\newcommand{\legendre}[2]{\genfrac{(}{)}{}{}{#1}{#2}}
\newcommand{\Mod}[1]{\ (\mathrm{mod}\ #1)}
\begin{document}
	\title[A note on balancing sequences and application to cryptography]{A note on balancing sequences and application to cryptography}
	\author[K. Anitha, I. Mumtaj Fathima and A R Vijayalakshmi]{K. Anitha$^{(1)}$, I. Mumtaj Fathima$^{(2)}$ and A R Vijayalakshmi$^{(3)}$}
	\address{$^{(1)}$Department of Mathematics, SRM IST Ramapuram, Chennai 600089, India}
	\address{$^{(2)}$Research Scholar, Department of Mathematics, Sri Venkateswara College of Engineering \\ Affiliated to Anna University, Sriperumbudur, Chennai 602117, India}
	\address{$^{(3)}$Department of Mathematics, Sri Venkateswara College of Engineering, Sriperumbudur, Chennai 602117, India}
	\email{$^{(1)}$subramanianitha@yahoo.com}
	\email{$^{(2)}$tbm.fathima@gmail.com}
	\email{$^{(3)}$avijaya@svce.ac.in}


	\begin{abstract}
	In this paper, we prove the lower bound for the number of balancing non-Wieferich primes in arithmetic progressions. More precisely, for any given integer $r\geq2$ there are $\gg\log x$ balancing non-Wieferich primes $p\leq x$ such that $p\equiv\pm1 \Mod{r}$, under the assumption of the $abc$ conjecture for the number field $\mathbb{Q}(\sqrt{2})$. Further, we discuss some applications of balancing sequences in cryptography.
	\end{abstract}
	
	\subjclass[2020]{11B25, 11B39, 11A41, 11T71, 14G50, 94A60}
	\keywords{abc conjecture, balancing sequences, arithmetic progressions, Wieferich primes, balancing non-Wieferich primes, Affine-Hill cipher, generalized balancing matrices}
	\maketitle
	\section{Introduction}
	
In $1909$, Arthur Wieferich \cite{wief} established the connection between first case of Fermat's last theorem and Wieferich primes. More precisely, if the first case of Fermat's last theorem fails for an odd prime $p$, then $p$ is a Wieferich prime for base $2$. The Wieferich primes are defined below: 

 Let $b \geq2$ be an integer. An odd prime $p$ is called a \textit{Wieferich prime for base $b$}
if
$$
b^{p-1}\equiv 1 \Mod{ p^2}.
$$
Otherwise it is called a \textit{non-Wieferich prime for base $b$}. 
As of today, the only known Wieferich primes for base $2$ are $1093$ and $3511$.
\medskip

A search for the Wieferich prime is one of the long-standing problems in number theory. It is still unknown whether there are finitely or infinitely many Wieferich primes that exist for any base $b\geq2$. However, Silverman \cite{sman} established the conditional results on non-Wieferich primes. Assuming the $abc$ conjecture \cite{gyo}, he proved the infinitude of non-Wieferich primes for any base $b$.

\medskip
\noindent
For any fixed $b\in \mathbb{Q}^*$, where  $\mathbb{Q}^*=\mathbb{Q}\backslash\{0\}$ and $b\neq\pm1$, if the $abc$ conjecture is true, then
$$
	\big|\{primes \, p\leq x: b^{p-1}
	\not\equiv 1 \Mod {p^2} \}\big|\gg_b \log x.
$$
In $2013$, Graves and Ram Murty \cite{graves} improved Silverman's result to certain arithmetic progressions. They showed that, if $b$ and $r$ are positive integers and assume the $abc$ conjecture, then
$$
	\big|\{primes \, p\leq x: p\equiv 1 \Mod r,\\b^{p-1}
	\not\equiv 1 \Mod {p^2}\}\big|\gg \frac{\log x}{\log\log x}.
$$
Then, there has been further enhancement made by Chen and Ding \cite{chending}.
$$
	\big|\{primes \, p\leq x: p\equiv 1 \Mod r,\\b^{p-1}
	\not\equiv 1 \, \Mod {p^2}\}\big|\gg \frac{\log x\,(\log\log\log x)^M}{\log\log x},
$$
where $M$ is any fixed positive integer.
Recently, Ding \cite{ding}  further strengthened the lower bound as
$$
	\big|\{primes \, p\leq x: p\equiv 1 \Mod r,\\b^{p-1}
	\not\equiv 1 \Mod {p^2}\}\big|\gg\log x.
$$

In this paper, we prove the similar lower bound for non-Wieferich primes in balancing numbers $\{B_n\}$ (defined in Section \ref{balancing}), assuming the abc conjecture for the number field $\mathbb{Q}(\sqrt{2})$.

\medskip
In $1999$, Behera and Panda \cite{behera} first proposed the concept of balancing numbers and studied their properties. Then, Rout \cite{rout} defined the balancing Wieferich primes as follows:

	An odd prime $p$ is called a \textit{balancing Wieferich prime} if it satisfies the congruence
$$
\displaystyle B_{p-\legendre{8}{p}}\equiv 0 \Mod{p^2},
$$
where $\legendre{8}{p}$ denotes the Legendre symbol. Otherwise, it is called a \textit{balancing non-Wieferich prime}.
Recently, Wang and Ding \cite{wang} proved that there are  $\gg\log x$ balancing non-Wieferich primes, assuming the $abc$ conjecture for the number field $\mathbb{Q}(\sqrt{2})$. Earlier, Rout \cite{rout} and Dutta et al. \cite{dutta} proved some lower bounds for the number of balancing non-Wieferich primes $p$ such that $p\equiv1\Mod{r}$, where $r\geq2$ be a fixed integer. 
However, Wang and Ding \cite{wang} remarked that their results had some gaps. To the best of our knowledge, the main theorem in this paper is the first result in this direction which addresses the problem of balancing non-Wieferich primes in arithmetic progressions.

\medskip
\noindent
More precisely, we prove the following main theorem: 
\begin{thm}\label{theorem2}
	Let $r\geq2$ be any fixed integer and let $n>1$ be any integer. Assume that the $abc$ conjecture for the number field $\mathbb{Q}(\sqrt{2})$ is true. Then
	\begin{align*}
		\big|\big\{primes \,  p\leq x: p &\equiv \pm1 \Mod{r}, \, 
		B_{p-\legendre{8}{p}}\not\equiv 0 \Mod{p^2}\big\}\big|
		\gg_{\alpha, r} \log x.
	\end{align*}
\end{thm}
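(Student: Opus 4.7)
The plan is to adapt the technique of Ding \cite{ding} and Wang--Ding \cite{wang} to the setting of balancing numbers in an arithmetic progression, building on the approach of Rout \cite{rout}. The role played by the rational $abc$ conjecture in the $b^{n}-1$ setting is taken here by the $abc$ conjecture over the number field $\mathbb{Q}(\sqrt{2})$, which is the natural home of the Binet formula $B_n = (\alpha^n - \beta^n)/(\alpha - \beta)$ with $\alpha,\beta \in \mathbb{Z}[\sqrt{2}]$.

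First, I would set up the basic identity. Writing $C_n = (\alpha^n + \beta^n)/2$ for the companion Lucas--balancing sequence, one has the Pell relation $C_n^2 - 8 B_n^2 = 1$, which (rearranged as $1 + 8 B_n^2 = C_n^2$, or viewed via the factorisation of $\alpha^n \pm 1$ in $\mathbb{Z}[\sqrt{2}]$) furnishes an $abc$-triple. Applying the $abc$ conjecture for $\mathbb{Q}(\sqrt{2})$ to this triple yields, for any $\varepsilon > 0$, a radical lower bound of the form
\[
\operatorname{rad}(B_n) \;\gg_{\varepsilon}\; B_n^{\,1-\varepsilon},
\]
with an implied constant independent of $n$.

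Next, I would extract primes in the desired progression via primitive divisors. Since the rank of apparition $m_p$ of a prime $p$ in $(B_n)$ divides $p - \bigl(\tfrac{8}{p}\bigr)$, any primitive divisor of $B_n$ (a prime with $m_p = n$) combined with the choice $n = rk$ gives $r \mid p - \bigl(\tfrac{8}{p}\bigr)$; choosing $k$ so as to force $\bigl(\tfrac{8}{p}\bigr) = 1$ (for instance by constraining $n$ modulo $8$, or by splitting into the two subprogressions $p \equiv \pm 1 \pmod{r}$ and retaining a positive proportion) then pins down $p \equiv 1 \pmod{r}$. For the counting, a lifting-the-exponent style observation for the balancing sequence shows that any balancing Wieferich prime dividing $B_n$ already satisfies $v_p(B_n) \geq 2$; together with the radical lower bound this forces the total Wieferich contribution to be at most $B_n^{\varepsilon}$, so the non-Wieferich part of the radical still has size $\geq B_n^{\,1-2\varepsilon}$.

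The final step is to sum these contributions over $n = rk$ with $k$ in a range $[1,K]$ with $K \asymp \log x$, arranged so that the primitive divisors produced are all $\leq x$, and to use the primitivity of $m_p$ to avoid double-counting primes across different $n$. This should deliver $\gg \log x$ distinct balancing non-Wieferich primes $p \leq x$ with $p \equiv 1 \pmod{r}$. The main obstacle I expect is the precise isolation of the sub-progression $p \equiv 1 \pmod{r}$: the rank-of-apparition argument naturally delivers $p \equiv \pm 1 \pmod{r}$, and retaining a positive proportion in the $+1$ case (rather than losing more than a bounded factor) requires delicate bookkeeping with the Jacobi symbol $\bigl(\tfrac{8}{p}\bigr)$ and its interaction with $p \bmod 8$ and with $r$. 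A secondary point of care is formulating the lifting-the-exponent lemma for $(B_n)$ cleanly enough to localise the squarefull part of $B_n$ onto balancing Wieferich primes.
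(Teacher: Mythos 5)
Your skeleton (the $abc$ conjecture forces the powerful part of $B_n$ to be small, so the primitive part contributes a new square-free, hence balancing non-Wieferich, prime divisor, and these are counted over indices up to $K\asymp \log x$) matches the paper's strategy, but two essential steps are missing, and one of them is exactly where the strength of the theorem lives. First, a global radical bound $\mathrm{rad}(B_{nr})\gg B_{nr}^{1-\varepsilon}$ does not localize to the primitive part: the \emph{new} primes at index $nr$ live in the cyclotomic factor $\Phi_{nr}(\alpha/\beta)$, whose size is only about $\alpha^{2\phi(nr)}$, whereas the powerful part of $B_{nr}$ is bounded above only by $B_{nr}^{2\varepsilon}\approx\alpha^{2\varepsilon nr}$. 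For a fixed $\varepsilon$ these are not comparable for \emph{every} $n$: when $nr$ has many prime factors one can have $\phi(nr)/(nr)$ as small as $\asymp 1/\log\log(nr)<\varepsilon$, and the primitive part may then be entirely swallowed by the powerful part together with the prime factors of $nr$. So your implicit claim that each $k\leq K$ produces a fresh non-Wieferich prime fails. The paper repairs this by working directly with $X^{\prime}_{nr}=\gcd(X_{nr},\Phi_{nr}(\alpha/\beta))$, proving $X^{\prime}_{nr}\gg\alpha^{2(\phi(nr)-\varepsilon nr)}$, and then invoking Ding's average $\sum_{n\leq x}\phi(nr)/(nr)=c(r)x+O(\log x)$ to show that $X^{\prime}_{nr}>nr$ for a positive proportion of $n\leq x$. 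That density argument is the entire source of the improvement from $\log x/\log\log x$ to $\log x$, and it is absent from your plan.

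Second, the obstacle you flag at the end --- that rank-of-apparition reasoning only yields $nr\mid p-\bigl(\frac{8}{p}\bigr)$, i.e.\ $p\equiv\pm1\pmod{nr}$ --- is not an optional refinement but the very congruence the theorem asserts, and your proposal leaves it unresolved; ``delicate bookkeeping with the Jacobi symbol'' is not yet an argument. The paper obtains $p\equiv 1\pmod{nr}$ from the cyclotomic divisibility statement (if $p\mid\Phi_m(b)$ then $p\mid m$ or $p\equiv1\pmod m$) applied with $b=\alpha/\beta$, and some statement of this kind, valid in $\mathbb{Q}(\sqrt{2})$, is indispensable. On the positive side, your LTE-style reduction (a balancing Wieferich prime dividing $B_n$ divides it to order at least $2$, so non-Wieferich primes can be read off the square-free part) is correct and is precisely the lemma the paper quotes from Rout, and your Pell-equation route $1+8B_n^2=C_n^2$ to the bound on the powerful part is a reasonable substitute for the cited Lemma $Y_{nr}\ll_{\varepsilon}B_{nr}^{2\varepsilon}$.
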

Further, as an application in cryptography, the various studies related to public key encryption-decryption schemes based on the recurrence sequences have been noted. In $2014$, Ray et al. \cite{ray} developed a scheme using finite state machines, recurrence relation of balancing sequences, and balancing matrices. Viswanath and Ranjith kumar \cite{viswanath} proposed the concept of public-key cryptography using Hill cipher techniques and developed the cryptosystem using rectangular matrices. Then, further enchancement made by Sundarayya and Prasad \cite{sundarayya} using Affine-Hill cipher techniques. Recently, Prasath and Mahato \cite{prasad} proposed a public-key cryptosystem using Affine-Hill cipher with generalized Fibonacci matrix and discussed its strength.

In Section \ref{crypto}, we propose a public-key cryptosystem using \textit{Affine-Hill cipher} with a generalized balancing matrix with large power $k$, i.e., $Q_{B_{s}}^{k}$ as a key. We exchange the key matrix $K=Q_{B_{s}}^{k}$ of order $s\times s$ for encryption-decryption scheme with the help of balancing sequences under prime modulo. Instead of exchanging a key matrix, in this scheme we simply need to trade a pair of numbers $(s, k)$, which results in a wide key-space and lower time and space complexity.
\section{Preliminaries}\label{sec}
\subsection{Balancing numbers}\label{balancing}
The sequence of balancing numbers $\{B_n\}$ is defined by the recurrence relation 
\begin{equation}\label{recurrencebalance}
B_{n+1}=6B_n-B_{n-1}	
\end{equation}
 for $n\geq1$ with initial conditions $B_0=0$ and $B_1=1$. 
\begin{defn}\cite{behera}
	A positive integer $n$ is called a \textit{balancing number}
if
$$
		1+2+\ldots+(n-1)=(n+1)+(n+2)+\ldots+(n+l),
$$
where $l\in \mathbb{Z^+}$ is called the \textit{balancer} corresponding to the balancing number $n$.
\end{defn}
In other words, $n\in \mathbb{Z^+}$ is a balancing number if and only if $n^2$ is a triangular number. i.e., $8n^2+1$ is a perfect square.\\
 The \textit{Binet formula} for balancing number is
\begin{equation*}
	B_n=\frac{\alpha^n-\beta^n}{\alpha-\beta},
\end{equation*}
where $\displaystyle\alpha=3+2\sqrt{2}$ and $\displaystyle\beta=3-2\sqrt{2}$.\\
 Throughout this paper, we take $\displaystyle\alpha=3+2\sqrt{2}$ and $\displaystyle\beta=3-2\sqrt{2}$.
Further we note that, for any prime $p>2, \, B_{p-\legendre{8}{p}}\equiv 0 \Mod p$ \cite{paro}. 

\subsection{The $abc$ conjecture}
The $abc$ conjecture was formulated by Oesterl\'{e}\cite{oesterle} and Masser \cite{masser}. We state the $abc$ conjecture \cite{gyo} below: \\For any given real number $\varepsilon>0$, there is a constant $C_{\varepsilon}$ which depends only on $\varepsilon$ such that for every triple of positive integers $a, \, b, \,c$ satisfying $a+b=c$ with $\gcd(a, b)=1$, we have
$$
	c<C_{\varepsilon}(rad(abc))^{1+\varepsilon},
$$
where $rad(abc)=\textstyle\prod\limits_{p|abc}p$.\\
We now recall the definition of Vinogradov symbol.
\begin{defn}\cite{vojta}
	Let $f$ and $g$ are two non-negative functions. If $f<cg$ for some positive constant $c$, then we write $f\ll g$ or $g\gg f$. It is also called \textit{Vinogradov symbol}.	
\end{defn}

\subsubsection{The $abc$ conjecture for number fields(\cite{vojta}, \cite{gyo})}
Let $K$ be an algebraic number field and $K^{*}=K\backslash\{0\}$. Let $V_K$ be the set of primes on $K$, that is any $\upsilon\in V_K$ is an equivalence class of non-trivial norms on $K$ (finite or infinite). Let
$\|x\|_\upsilon:=N_{K/\mathbb{Q}}(\mathfrak{p})^{-\upsilon_\mathfrak{p}(x)}$, if $\upsilon$ (finite) is defined by a prime ideal $\mathfrak{p}$ of the ring of integers $\mathcal{O}_K$ in $K$ and $\upsilon_\mathfrak{p}$ is the corresponding valuation, where $N_{K/\mathbb{Q}}$ is the absolute value norm. For $\upsilon$ is infinite and let $\|x\|_\upsilon:=|\rho(x)|^e$ for all non-conjugate embeddings $\rho:K\rightarrow\mathbb{C}$ with $e=1$ if $\rho$ is real and $e=2$ if $\rho$ is complex.
\medskip

The \textit{height} of any triple $(a, \, b, \, c) \in K^*$ is 
$$
	H_{K}(a, \,b, \, c):=\displaystyle\prod\limits_{\upsilon\in V_K}\max(\|a\|_{\upsilon}, \, \|b\|_{\upsilon}, \, \|c\|_{\upsilon}).
$$
The \textit{radical} of the triple $(a, \, b, \, c) \in K^*$ is 
$$
	rad_K(a, \, b, \, c):=\displaystyle\prod\limits_{\mathfrak{p}\in I_{K}(a, \, b, \,c)}N_{K/\mathbb{Q}}(\mathfrak{p})^{\upsilon_\mathfrak{p}(p)},
$$
where $p$ is a rational prime with $p\mathbb{Z}=\mathfrak{p}\cap\mathbb{Z}$ and $I_K(a, \, b, \,c)$ is the set of all prime ideals $\mathfrak{p}$ of $\mathcal{O}_K$ for which $\|a\|_\upsilon, \, \|b\|_\upsilon, \, \|c\|_\upsilon$ are not equal.
\medskip

The $abc$ conjecture for algebraic number field $K$ states that for any $\varepsilon>0$ there exists a positive constant $C_{K,\varepsilon}$ such that
$$
	H_{K}(a, \,b, \, c)\leq C_{K, \varepsilon}(rad_K(a,\, b, \, c))^{1+\varepsilon},
$$
for all $a, \, b, \, c\in K^*$ satisfying $a+b+c=0$.
\subsection{Cyclotomic polynomial}
We now recall the cyclotomic polynomial and some of its properties.
\begin{defn}\cite{ram}
	 For any integer $m\geq1$, the $m^{th}$ \textit{cyclotomic polynomial} is
$$
		\Phi_m(X)=\prod\limits_{\substack{i=1\\ \gcd(i,m)=1}}^{m}(X-\zeta_m^i),
$$
	where $\zeta_m$ is the primitive $m^{th}$ root of unity.
\end{defn}
It follows that the recursion formula for cyclotomic polynomial is
\begin{equation}\label{se}
	X^m-1=\prod\limits_{\substack{d|m }}\Phi_d(X).
\end{equation}
The following lemma characterizes the prime divisors of $\Phi_m(\alpha,\beta)$, \\ where $$
\Phi_m(\alpha,\beta)=\prod\limits_{\substack{i=1\\ \gcd(i,m)=1}}^{m} (\alpha-\zeta_m^i\beta).
$$
\begin{lem}(Stewart \cite[Lemma 2]{stewart})\label{rm}
Let $(\alpha+\beta)^2$ and $\alpha\beta$ be coprime non-zero integers with $\alpha/\beta$ not a root of unity. If $m>4$ and $m\neq6,  12$ then $P(m/\gcd(3,m))$ divides $\Phi_m(\alpha,\beta)$ to at most the first power.  All other prime factors of $\Phi_m(\alpha, \beta)$ are congruent to $\pm1 \Mod{m}$. Further, if $m>e^{452}4^{67}$ then $\Phi_m(\alpha, \beta)$ has at least one prime factor congruent to $\pm1 \Mod{m}$.	
\end{lem}
Here, $P(k)$ denotes the greatest prime factor of $k$ with the convention that $P(0)=P(\pm1)=1$. We note that, Yu. Bilu et al. \cite{bilu} reduced the above lower bound $e^{452}4^{67}$ to $30$. In the Lemma \ref{rm}, Stewart \cite{stewart} considered the cyclotomic polynomial 
\begin{equation}\label{stewart cyclo}
		\alpha^m-\beta^m=\prod\limits_{\substack{d|m }}\Phi_d(\alpha, \beta).
\end{equation}
Since $\beta$ is a unit in $\mathbb{Q}(\sqrt{2})$, we notice that the prime divisors of $\Phi_m(\alpha, \beta)$ and the prime divisors of $\Phi_m(\alpha/\beta)$ are the same. Thus by using above Lemma \ref{rm}, the prime divisors of $\Phi_m(\alpha/\beta)$ are congruent to $\pm1 \Mod{m}$. 
\begin{lem}(Rout \cite[Lemma 2.10]{rout})\label{rg}
	For any real number $b$ with $|b|>1$, there exists $C>0$ such that
	\begin{equation*}
		|\Phi_m(b)|\geq C|b|^{\phi(m)},
	\end{equation*}
	where $\phi(m)$ is Euler's totient function.
\end{lem}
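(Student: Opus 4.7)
The plan is to exploit Möbius inversion applied to the identity $X^m - 1 = \prod_{d\mid m} \Phi_d(X)$, which yields the multiplicative formula $\Phi_m(X) = \prod_{d\mid m}(X^d - 1)^{\mu(m/d)}$. For real $b$ with $|b|>1$, taking absolute values and logarithms gives
$$\log |\Phi_m(b)| = \sum_{d\mid m} \mu(m/d)\, \log|b^d - 1|.$$
The strategy is to peel off an explicit main term $\phi(m)\log|b|$ and prove that the remainder is bounded uniformly in $m$ by a constant depending only on $b$.

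To extract the main term, I would factor $|b^d - 1| = |b|^d\cdot |1 - b^{-d}|$ and split the log:
$$\log|\Phi_m(b)| = \log|b|\cdot \sum_{d\mid m} \mu(m/d)\, d \;+\; \sum_{d\mid m} \mu(m/d)\, \log|1 - b^{-d}|.$$
The first sum evaluates to $\phi(m)$ by the classical identity $\sum_{d\mid m}\mu(m/d)\,d = \phi(m)$ (Möbius inversion applied to $m=\sum_{d\mid m}\phi(d)$), leaving only the error term
$$E_m := \sum_{d\mid m}\mu(m/d)\,\log|1 - b^{-d}|$$
to be controlled.

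For the error bound, I would use the elementary Taylor estimate $\bigl|\log|1 - b^{-d}|\bigr| \leq \kappa_b\,|b|^{-d}$, valid for every $d\geq 1$ with $\kappa_b := |b|/(|b|-1)$ depending only on $b$ (this follows from $-\log(1-x)\leq x/(1-x)$ on $[0,1)$ together with $\log(1+x)\leq x$). Since $|\mu(m/d)|\leq 1$, dropping the divisibility constraint on $d$ yields
$$|E_m| \leq \kappa_b \sum_{d=1}^{\infty} |b|^{-d} = \frac{\kappa_b}{|b|-1}.$$
Exponentiating then gives $|\Phi_m(b)| \geq C\,|b|^{\phi(m)}$ with $C := \exp\bigl(-\kappa_b/(|b|-1)\bigr) > 0$, which is the desired inequality.

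The obstacle worth flagging is that the naive reverse-triangle estimate $|\Phi_m(b)| = \prod_{\zeta}|b-\zeta|\geq (|b|-1)^{\phi(m)}$ is \emph{not} strong enough for the stated conclusion, because $(1-1/|b|)^{\phi(m)}\to 0$ as $\phi(m)\to\infty$. The Möbius inversion is essential precisely because it replaces a product of $\phi(m)$ factors (requiring delicate cancellation among roots of unity) with a short sum over divisors in which the geometric decay of $|b|^{-d}$ dominates the divisor count, giving a bound uniform in $m$ no matter how composite $m$ is. An alternative route would expand $\sum_{\zeta}\log|1-\zeta/b|$ via Ramanujan sums and use $|c_m(k)|\leq \gcd(m,k)\leq k$, but the Möbius-inversion argument above is shorter and avoids any nontrivial input beyond the standard formulas.
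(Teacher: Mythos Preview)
The paper does not supply a proof of this lemma; it is quoted from Rout \cite{rout} and used as a black box, so there is no in-paper argument to compare against.

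Your argument is correct. The M\"obius-inversion identity $\Phi_m(X)=\prod_{d\mid m}(X^d-1)^{\mu(m/d)}$, the factorisation $|b^d-1|=|b|^d\,|1-b^{-d}|$, the evaluation $\sum_{d\mid m}\mu(m/d)\,d=\phi(m)$, and the geometric bound on the error term all go through as written. The case $b<-1$ causes no trouble: when $b^{-d}<0$ one has $|1-b^{-d}|=1+|b|^{-d}$ and $\log(1+x)\le x$ suffices, while when $b^{-d}>0$ your inequality $-\log(1-x)\le x/(1-x)\le \kappa_b\,x$ (valid since $x=|b|^{-d}\le|b|^{-1}$) does the job. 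The resulting constant $C=\exp\bigl(-|b|/(|b|-1)^2\bigr)$ is explicit and independent of $m$, which is exactly the content of the lemma. Your closing remark that the crude estimate $(|b|-1)^{\phi(m)}$ is too weak is also correct and pinpoints why some cancellation device (M\"obius or Ramanujan sums) is genuinely needed.
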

\subsection{Some lemmas}
\noindent We state some of the important lemmas from \cite{rout}, \cite{wang} and \cite{ding}.
\begin{lem}(Rout \cite[Lemma 2.12]{rout})\label{bn}
	Suppose that $B_n$ factored into $X_nY_n$, where $X_n$ and $Y_n$ are square-free and powerful part of $B_n$ respectively. If $p|X_n,$ then
$$
		B_{p-\big(\frac{8}{p}\big)}\not\equiv 0 \Mod{p^2}.
$$
\end{lem}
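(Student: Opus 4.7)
The plan is to combine the rank-of-apparition theory for balancing numbers with a $p$-adic "lifting the exponent" identity for Lucas sequences. Let $m = m(p)$ denote the rank of apparition of $p$ in $\{B_n\}$, that is, the smallest positive integer with $p \mid B_m$. Two classical facts recalled in the preliminaries will be used: (a) $p \mid B_n$ if and only if $m \mid n$ (this is essentially the periodicity definition combined with the Binet formula), and (b) for any odd prime $p$, $m \mid p - \bigl(\frac{8}{p}\bigr)$, as noted in the text just before Lemma \ref{bn}.

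The analytic heart is the identity
\begin{equation*}
    v_p(B_{mk}) \;=\; v_p(B_m) + v_p(k)
\end{equation*}
valid for all $k \geq 1$ whenever $p$ is an odd prime with $p \mid B_m$. This is the standard LTE-type statement for the Lucas sequence $U_n = (\alpha^n - \beta^n)/(\alpha - \beta)$ with $\alpha = 3 + 2\sqrt 2$, $\beta = 3 - 2\sqrt 2$; it is available because $\alpha\beta = 1$ and because $p$ being odd guarantees $p \nmid \alpha - \beta = 4\sqrt 2$ (working in $\mathcal O_K = \mathbb{Z}[\sqrt 2]$). Assuming this, suppose $p \mid X_n$, so $v_p(B_n) = 1$. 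Writing $n = mk$, the identity gives $1 = v_p(B_m) + v_p(k)$. Since $v_p(B_m) \geq 1$ by definition of $m$, we must have $v_p(B_m) = 1$ and $p \nmid k$.

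Now set $N := p - \bigl(\frac{8}{p}\bigr)$. By (b) we may write $N = m\ell$ with $\ell \in \mathbb{Z}_{\geq 1}$, and applying the formula once more yields
\begin{equation*}
    v_p(B_N) \;=\; v_p(B_m) + v_p(\ell) \;=\; 1 + v_p(\ell).
\end{equation*}
It remains to check $v_p(\ell) = 0$. Since $\ell = N/m \leq N \leq p + 1 < 2p$, the only way $p$ could divide $\ell$ is if $\ell = p$; but then $N = mp$ combined with $N = p \mp 1$ forces $mp = p \mp 1$, which has no solution with $m \geq 1$. Hence $v_p(\ell) = 0$, so $v_p(B_N) = 1$ and in particular $B_{p - \left(\frac{8}{p}\right)} \not\equiv 0 \pmod{p^2}$.

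The main obstacle is justifying the lifting-the-exponent identity for $B_n$. This is routine for Lucas sequences once one expands $\alpha^{mk} - \beta^{mk}$ binomially in the form $((\alpha^m - \beta^m) + \beta^m)^k - \beta^{mk}$ and tracks the $p$-adic valuation of each term using $\alpha^m \equiv \beta^m \pmod{\mathfrak p}$ for any prime $\mathfrak p$ of $\mathcal O_K$ above $p$; however, one must carefully handle the hypothesis $p$ odd to avoid the bad prime $2 \mid (\alpha - \beta)^2 = 32$, and verify that the identity descends from $\mathcal O_K$ back to $\mathbb Z$ since $B_n \in \mathbb Z$. Everything else in the argument is an elementary divisibility count.
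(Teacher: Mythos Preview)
The paper does not supply its own proof of this lemma; it is simply quoted from \cite[Lemma 2.12]{rout}, so there is no in-paper argument to compare against.

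Your argument is correct and is essentially the canonical one for results of this shape. The two ingredients---(i) the rank-of-apparition property $m(p)\mid n \Longleftrightarrow p\mid B_n$ together with $m(p)\mid p-\bigl(\tfrac{8}{p}\bigr)$, and (ii) the $p$-adic lifting identity $v_p(B_{mk})=v_p(B_m)+v_p(k)$ for the Lucas sequence $U_n(6,1)$---are exactly what is required, and the chain $v_p(B_n)=1\Rightarrow v_p(B_m)=1\Rightarrow v_p(B_N)=1$ is cleanly executed. The size bound $\ell\le N\le p+1<2p$ to rule out $p\mid\ell$ is the right finishing move.

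One cosmetic remark on the LTE justification: the binomial expansion you sketch is slightly awkward as written. A tidier route is to use the factorisation
\[
\frac{B_{mk}}{B_m}=\sum_{j=0}^{k-1}\alpha^{mj}\beta^{m(k-1-j)},
\]
note that $\alpha^m\equiv\beta^m$ modulo any prime $\mathfrak p$ of $\mathbb Z[\sqrt2]$ above $p$ (since $p\mid B_m$ and $p$ is odd, hence coprime to $(\alpha-\beta)^2=32$), reduce the sum to $k\beta^{m(k-1)}\pmod{\mathfrak p}$, and then induct on $v_p(k)$ via the special case $k=p$. Because $B_{mk}/B_m$ is already a rational integer, no separate descent from $\mathcal O_K$ to $\mathbb Z$ is needed. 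This does not affect the validity of your proof.
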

\begin{lem}(Rout \cite[Lemma 2.9]{rout})\label{an}
	For any $n\geq2$, the $n^{th}$ balancing number satisfies the following inequality.
$$
		\alpha^{n-1}<B_n<\alpha^{n}.
$$
\end{lem}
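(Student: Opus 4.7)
The plan is to derive both inequalities directly from the Binet formula stated just above the lemma, using only the elementary numerical facts $\alpha - \beta = 4\sqrt{2}$, $\alpha\beta = 1$ (so $\beta = 1/\alpha$), and $0 < \beta < 1 < \alpha$, all of which follow immediately from $\alpha = 3 + 2\sqrt{2}$ and $\beta = 3 - 2\sqrt{2}$. No deeper machinery should be required; the whole lemma reduces to a computation with real-number inequalities.

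For the upper bound $B_n < \alpha^n$, I would simply use $\beta^n > 0$ to write
\begin{equation*}
  B_n \;=\; \frac{\alpha^n - \beta^n}{4\sqrt{2}} \;<\; \frac{\alpha^n}{4\sqrt{2}} \;<\; \alpha^n,
\end{equation*}
where the last step uses $4\sqrt{2} > 1$. This half in fact holds for every $n \geq 1$ and does not even use the hypothesis $n \geq 2$.

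For the lower bound $\alpha^{n-1} < B_n$, the Binet formula recasts the claim as $\alpha^n - \beta^n > 4\sqrt{2}\,\alpha^{n-1}$. Dividing through by $\alpha^{n-1} > 0$ and using $\alpha^{-1} = \beta$ turns the left-hand side into $\alpha - \beta^{n}\beta^{n-1} = \alpha - \beta^{2n-1}$, so the inequality is equivalent to $\beta^{2n-1} < \alpha - 4\sqrt{2}$. The key observation is the identity $\alpha - 4\sqrt{2} = 3 - 2\sqrt{2} = \beta$, which reduces the task to $\beta^{2n-1} < \beta$. Since $0 < \beta < 1$ and the map $k \mapsto \beta^{k}$ is strictly decreasing in $k$, this holds precisely when $2n - 1 > 1$, i.e.\ when $n \geq 2$, matching the hypothesis exactly.

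No genuine obstacle arises; the only moment that asks for a second glance is spotting the identity $\alpha - 4\sqrt{2} = \beta$, after which the lower bound collapses to a one-line monotonicity argument. As a sanity check on sharpness, at $n = 1$ one gets $2n - 1 = 1$ and the inequality $\beta^{2n-1} < \beta$ degenerates to equality, consistent with $B_1 = 1 = \alpha^{0}$; this explains precisely why the lemma must exclude $n = 1$.
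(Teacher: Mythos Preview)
Your argument is correct: both inequalities follow from the Binet formula exactly as you describe, and the identity $\alpha - 4\sqrt{2} = \beta$ is the key step that makes the lower bound sharp precisely at $n \geq 2$. The paper does not supply its own proof of this lemma; it is simply quoted from \cite[Lemma~2.9]{rout}, so there is no in-paper argument to compare against.
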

\begin{lem}(Rout \cite{rout})\label{ec}
	If the $abc$ conjecture for the number field $\mathbb{Q}(\sqrt{2})$ is true, then $ Y_{nr}\ll_\varepsilon B^{2\varepsilon}_{nr}$.
\end{lem}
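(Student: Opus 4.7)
The plan is to bundle $B_{nr}$ into an $abc$-triple in $K=\mathbb{Q}(\sqrt{2})$ whose height is essentially $B_{nr}^{2}$ and whose radical is essentially $\mathrm{rad}(B_{nr})^{2}$, and then exploit the squarefree-powerful decomposition $B_{nr}=X_{nr}Y_{nr}$ to turn the $abc$ inequality into the desired control on $Y_{nr}$.

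Concretely, using $\alpha-\beta=4\sqrt{2}$ together with the Binet formula, I would apply the $abc$ conjecture for $K$ to the triple
\[
a=\alpha^{nr},\qquad b=-\beta^{nr},\qquad c=-4\sqrt{2}\,B_{nr}\in K^{*},
\]
which satisfies $a+b+c=0$. Because $\alpha\beta=1$, both $\alpha$ and $\beta$ are units in $\mathcal{O}_K$, so $\|a\|_{v}=\|b\|_{v}=1$ at every finite place $v$; and since $c\in\mathcal{O}_K$, the finite places contribute nothing to the height. At each of the two real infinite places, the winning element has absolute value $\asymp\alpha^{nr}$ (it is $a$ at the identity embedding and $b$ at the conjugate embedding, while $c$ has absolute value $4\sqrt{2}\,B_{nr}$ at both). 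Combined with Lemma \ref{an}, this gives $H_K(a,b,c)\asymp\alpha^{2nr}\asymp B_{nr}^{2}$.

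For the radical, the same unit observation tells us that $I_K(a,b,c)$ is exactly the set of prime ideals of $\mathcal{O}_K$ dividing $4\sqrt{2}\,B_{nr}$. The prime above $2$ is ramified and contributes only an absolute constant; for each odd rational prime $p\mid B_{nr}$, whether $p$ splits or is inert in $K$, the total contribution of the primes above $p$ to $\mathrm{rad}_K$ equals $p^{2}$ (using that $B_{nr}\in\mathbb{Z}$ forces both prime ideals above a split $p$ to divide $(B_{nr})$). Hence $\mathrm{rad}_K(a,b,c)\ll\mathrm{rad}(B_{nr})^{2}$, and since $\gcd(X_{nr},Y_{nr})=1$ with $Y_{nr}$ powerful we have $\mathrm{rad}(Y_{nr})\leq\sqrt{Y_{nr}}$, so
\[
\mathrm{rad}_K(a,b,c)\ll X_{nr}^{2}\,Y_{nr}.
\]

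Feeding these estimates into the $abc$ inequality yields $B_{nr}^{2}\ll_{\epsilon}\bigl(X_{nr}^{2}Y_{nr}\bigr)^{1+\epsilon}$; writing $B_{nr}^{2}=X_{nr}^{2}Y_{nr}^{2}$ and cancelling $X_{nr}^{2}$ gives $Y_{nr}^{1-\epsilon}\ll X_{nr}^{2\epsilon}\leq B_{nr}^{2\epsilon}$, after which a routine rescaling of $\epsilon$ delivers the announced bound $Y_{nr}\ll_{\epsilon}B_{nr}^{2\epsilon}$. The main obstacle, as I see it, is the place-by-place bookkeeping in the middle two paragraphs: one must leverage that $\alpha,\beta$ are units in order to wipe out the finite-place height contributions, and then handle the ramification at $2$ together with the split/inert dichotomy at odd primes to land on exactly the square of $\mathrm{rad}(B_{nr})$, since anything weaker would spoil the cancellation against $B_{nr}^{2}=X_{nr}^{2}Y_{nr}^{2}$.
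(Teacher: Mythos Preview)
Your argument is correct, and it is exactly the natural route: apply the number-field $abc$ conjecture to the Binet identity $\alpha^{nr}-\beta^{nr}=4\sqrt{2}\,B_{nr}$, use that $\alpha,\beta$ are units in $\mathcal{O}_K$ to kill the finite-place height contributions, and then translate the radical back to $\mathrm{rad}(B_{nr})^{2}$ via the split/inert/ramified bookkeeping. The final algebra with the squarefree--powerful decomposition is clean.

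Note, however, that the present paper does \emph{not} supply its own proof of this lemma at all: it is simply quoted from Rout \cite{rout} (specifically the argument behind his Theorem~3.1), so there is nothing in the paper to compare against beyond the citation. Your write-up is essentially a reconstruction of Rout's proof, and the only cosmetic difference worth flagging is that Rout phrases the height and radical estimates directly in terms of $\alpha^{nr}$ rather than passing through $B_{nr}^{2}$, which avoids having to invoke Lemma~\ref{an} as an intermediate step; the content is identical.
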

This result is part of the proof of \cite[Theorem 3.1]{rout}.

	\begin{lem}(Wang and Ding \cite[Lemma 2.4]{wang})\label{l8}
		If $m<n$, then $\gcd(X^\prime_{m}, X^\prime_{n})=1$ or a power of $\sqrt{2}$.
	\end{lem}

\begin{lem}(Ding \cite[Lemma 2.5]{ding})\label{lim}
	For any given positive integers $r$ and $n$, we have
	$$
	\displaystyle \sum_{n\leq x}\frac{\phi(nr)}{nr}=c(r)x+O(\log x),
	$$
	where $c(r)=\displaystyle\prod_{p}\bigg(1-\frac{\gcd(p, r)}{p^2}\bigg)>0$ and the implied constant depends on $r$.
\end{lem}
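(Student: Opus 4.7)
The plan is to apply a standard Möbius inversion argument. Starting from the identity $\phi(m)/m = \sum_{d \mid m} \mu(d)/d$ with $m = nr$, I would swap the order of summation to obtain
\begin{equation*}
\sum_{n \le x} \frac{\phi(nr)}{nr} \;=\; \sum_{d \ge 1} \frac{\mu(d)}{d} \cdot \#\{n \le x : d \mid nr\}.
\end{equation*}
To evaluate the inner count, I would establish the elementary fact that $d \mid nr$ is equivalent to $(d/\gcd(d,r)) \mid n$: letting $g = \gcd(d,r)$ and writing $d = g d'$, $r = g r'$ (so that $\gcd(d',r') = 1$), the relation $g d' \mid n g r'$ simplifies to $d' \mid n r'$ and then, by coprimality, to $d' \mid n$. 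Hence the inner count equals $\lfloor x \gcd(d,r)/d \rfloor$, which is nonzero only for $d \le xr$.

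Next I would split $\lfloor \cdot \rfloor = \cdot + O(1)$ and treat the two contributions separately. The $O(1)$ part summed over square-free $d \le xr$ gives
\begin{equation*}
O\!\left(\sum_{\substack{d \le xr \\ \mu(d)^2 = 1}} \frac{1}{d}\right) \;=\; O(\log(xr)) \;=\; O_{r}(\log x),
\end{equation*}
while the main term $x \sum_{d \le xr} \mu(d) \gcd(d,r)/d^{2}$ can be extended to all $d \ge 1$ at the cost of a tail bound $x \sum_{d > xr} r/d^{2} = O(1)$, again with the implied constant depending only on $r$.

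Finally I would identify the resulting infinite series as an Euler product. Since $d \mapsto \gcd(d,r)$ is multiplicative and $\mu$ is supported on square-free integers, $\mu(d) \gcd(d,r)/d^{2}$ is a multiplicative function of $d$, and therefore
\begin{equation*}
\sum_{d=1}^{\infty} \frac{\mu(d) \gcd(d,r)}{d^{2}} \;=\; \prod_{p}\!\left(1 - \frac{\gcd(p,r)}{p^{2}}\right) \;=\; c(r).
\end{equation*}
Positivity of $c(r)$ is then clear factor by factor: for $p \nmid r$ the local factor is $1 - 1/p^{2} > 0$, while for the finitely many primes $p \mid r$ it equals $1 - 1/p > 0$, and the product converges by comparison with $\zeta(2)^{-1}$. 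The main technical point I anticipate is tracking the implied constants cleanly through the splitting of $\lfloor \cdot \rfloor$ and the truncation of the $d$-sum, so that the final error is genuinely $O(\log x)$ with implied constant depending only on $r$ and not worse.
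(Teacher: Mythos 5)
Your proof is correct, and it is essentially the standard Möbius-inversion/Euler-product computation: the paper itself gives no proof of this lemma (it simply cites Lemma 2.5 of Ding's paper), and the argument there proceeds along the same lines as yours. All the key steps check out — the reduction of $d\mid nr$ to $(d/\gcd(d,r))\mid n$, the $O_r(\log x)$ error from the floor function, the $O(1)$ tail, and the identification of the multiplicative series with $c(r)>0$.
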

\subsection{Hill-Cipher}
The idea of the Hill cipher is to use matrices to encrypt blocks of characters by replacing a block of a small number of letters with another block of the same size. The Hill encryption scheme replaces $n$ consecutive plaintext letters with $n$ ciphertext letters. The matrix representation for plaintext $A$, key matrix $K$, and cipher text $C$ are given as
\begin{align*}
	A =
	\begin{pmatrix}
		A_{1} & A_2 & \cdots & A_m	
	\end{pmatrix},
\end{align*}

\begin{align*}
	K =
	\begin{pmatrix}
		K_{1,1} & K_{1,2} & \cdots & K_{1,n}\\
		K_{2,1} & K_{2,2} & \cdots & K_{2,n}\\
		\vdots & \vdots & \vdots & \vdots\\
		K_{n,1}& K_{n,2} & \cdots & K_{n,n}
	\end{pmatrix},
\end{align*}
\begin{align*}
	C &=
	\begin{pmatrix}
		C_{1} & C_2 & \cdots & C_m	
	\end{pmatrix},
\end{align*}
where $A_i$ and $C_i$ are block matrices of size $1\times n$. Thus the Hill cipher is described as follows:\\
For encryption 
$$
\text{Enc}(A): C_i\equiv A_iK \Mod{p}.
$$
For decryption
$$
\text{Dec}(A): A_i\equiv K^{-1}C_i \Mod{p},
$$
where $p$ is a rational prime and $\gcd(det(K), p)=1$.
\subsection{Affine cipher}
An affine cipher of blocklength one is given by 
$$
\text{Enc}(x_i): y_i\equiv(ax_i+b) \Mod{26},
$$
where $a,b \in \mathbb{Z}_{26}$.
For decryption, we have to solve the function for $x_i$. So that, $a^{-1}\Mod{26}$ must exist, but $b$ can be any element of $Z_{26}$.
\subsection{Affine-Hill Cipher}
Affine-Hill Cipher is a polygraphic block cipher that extends the concept of Hill cipher by using the following encryption and decryption techniques.\\
For encryption 
$$
Enc(A): C_i\equiv(A_iK+B)  \Mod{p}.
$$
For decryption
$$
Dec(A): A_i\equiv (C_i-B)K^{-1} \Mod{p},
$$
where $A_i,C_i$, and $B$ are $1 \times n$ matrices and $K$ is $n\times n$ key matrix. Here, $p$ is prime greater than number of different characters used in plaintext. 
\subsection{Key exchange Technique (ElGamal algorithm)}
The ElGamal \cite{elgamal1}, \cite{elgamal2} cryptosystem can be constructed using any cyclic group in which the discrete-log problem is hard or believed to be hard. It will be broken if the discrete-log problem is solved. The cyclic group $\mathbb{F}_p^{\times}$ can be a multiplicative group of integers modulo $p$. In this technique, the global elements are the selected prime $p$ and chosen primitive root modulo $p$.
\subsection{Key Generation }
Choose a private key $G$ between $2$ and $p-2$ and select $g$ from primitive root modulo $p$, i.e., $g$ be the generator of $\mathbb{F}_p^{\times}$. Now we assign $E_1=g$. Computes $E_2=g^{G} \Mod{p}$. Suppose that \textit{Alice} and \textit{Bob} want to exchange key. Bob's public key is $(p, E_1, E_2)$ and his private key is $G$.
\subsubsection{Enciphering Stage}\label{secret}
\begin{enumerate}
	\item Alice can access Bob's public key  $(p, E_1, E_2)$.\\
	\item Alice chooses a random number $e$ such that $1<e<p-1$ and computes signature $k=E_1^{e}\Mod{p}$.\\
	\item Computes secret key $s=E_2^{e}\Mod{p}$.\\
	\item Calculates the cipher text $C=As \Mod{p}$\\
	\item Thus Alice can send encrypt message $(k, C)$ to Bob.
\end{enumerate}
\subsubsection{Deciphering Stage}
Bob uses his secret key to compute $s$.
\begin{align*}
	s &\equiv (E_2^{e}) \Mod{p}\\
	& \equiv (E_1^G)^{e} \Mod{p}\\
	& \equiv (E_1^{e})^{G} \Mod{p}\\
	& \equiv k^{G} \Mod{p}.
\end{align*}
Since both $k$ and $G$ are known to Bob, he can securely receive secret key $s$. Then by using Euclidean algorithm, $s^{-1}$ can be calculated. Thus, Bob will decrypt the cipher text $C$ and recover the plaintext $A=s^{-1}C$. 
\section{Main Results}
Let $r\geq 2$ be a given fixed integer and let $n>1$ be any integer. 

Let us take,
\begin{eqnarray*}
	X^\prime_{nr} &=& \gcd(X_{nr}, \Phi_{nr}(\alpha/\beta)), \\
	Y^\prime_{nr} &=& \gcd(Y_{nr}, \Phi_{nr}(\alpha/\beta)).
\end{eqnarray*}
The following theorem closely follows the proof of \cite[Theorem 3.1]{rout}. For the purpose of completeness we give the proof.

\begin{thm}\label{thm-4.1}
	Assume that the $abc$ conjecture for the number field $\mathbb{Q}(\sqrt{2})$ is true. Then for any $\varepsilon>0$,
	$
	X^\prime_{nr}
	\gg_{\varepsilon} B^{2(\phi(n)-\varepsilon)}_{\phi(r)}
	$.
\end{thm}
\begin{proof}
	By the recursion formula \eqref{se} we write,
	$$
		\Phi_{nr}(\alpha/\beta)=\displaystyle\frac{B_{nr}\Phi_1(\alpha/\beta)}{\beta^{nr-1}\displaystyle\prod_{d|nr}\Phi_{d}(\alpha/\beta)}.
	$$
	It follows that
	$$
		\Phi_{nr}(\alpha/\beta)|B_{nr}\alpha^{nr-1}.
	$$
	That is,
	$$
		\Phi_{nr}(\alpha/\beta)|X_{nr}Y_{nr}\alpha^{nr-1}.
	$$
Since $\alpha$ is a unit in $\mathbb{Q}(\sqrt{2})$, we have $\Phi_{nr}(\alpha/\beta)\nmid \alpha$. Thus, $	\Phi_{nr}(\alpha/\beta)|X_{nr}Y_{nr}$. As $\gcd(X_{nr},Y_{nr})=1$, we obtain either $\Phi_{nr}(\alpha/\beta)|X_{nr}$ or $\Phi_{nr}(\alpha/\beta)|Y_{nr}$. 

 We suppose that $\Phi_{nr}(\alpha/\beta)|X_{nr}$, it follows that $X^\prime_{nr}=\gcd(X_{nr},\, \Phi_{nr}(\alpha/\beta))=\Phi_{nr}(\alpha/\beta)$ and $Y^\prime_{nr}=\gcd(Y_{nr}, \, \Phi_{nr}(\alpha/\beta))=1$.
	Similar argument for $\Phi_{nr}(\alpha/\beta)|Y_{nr}$ implies that $X^\prime_{nr}=1$ and $Y^\prime_{nr}=\Phi_{nr}(\alpha/\beta)$. For any of these cases, we finally get
	\begin{equation}\label{ef}
		X^\prime_{nr}Y^\prime_{nr}=\Phi_{nr}(\alpha/\beta).
	\end{equation}
	By using Lemma \ref{rg} we write,
	\begin{eqnarray}
		| X^\prime_{nr}Y^\prime_{nr}|&=&|\Phi_{nr}(\alpha/\beta)| \\
		&\geq& C|\alpha/\beta|^{\phi(nr)} \\
		&=& C|\alpha^2|^{\phi(nr)}.\label{es}
	\end{eqnarray}
	Since $\{B_{nr}\}$ is a sequence of positive integers, by using Lemma \ref{an} we get,
	\begin{eqnarray}
		X^\prime_{nr}Y^\prime_{nr} &\geq&C\alpha^{2\phi(nr)}\label{this}\\
		&\geq& C(\alpha^{\phi(r)})^{2\phi(n)}  \\
		&>& C B^{2\phi(n)}_{\phi(r)}\label{et}.
	\end{eqnarray}
	Now by combining Lemma \ref{ec} with equation \eqref{et}, we obtain
	\begin{align*}
		X^\prime_{nr}B^{2\varepsilon}_{nr} &\gg_{\varepsilon} X^{\prime}_{nr}Y_{nr}\geq X^\prime_{nr}Y^\prime_{nr}\gg_{\varepsilon} B^{2\phi(n)}_{\phi(r)}  \\
		X^\prime_{nr} &\gg_{\varepsilon} \frac{B^{2\phi(n)}_{\phi(r)}}{B^{2\varepsilon}_{nr}}.
	\end{align*}
	
	After simplification we write,
$$
		X^\prime_{nr}
		\gg_{\varepsilon} B^{2(\phi(n)-\varepsilon)}_{\phi(r)}.
$$


This completes the proof of Theorem \ref{thm-4.1}.
\end{proof}

Let us take $T=\{n:\textstyle X^\prime_{nr}>nr\}$ and $T(x)=|T\cap[1, \, x]|$. The following lemma is an analogous result of \cite[Lemma 2.6]{ding} for the balancing sequences.
\begin{lem}\label{l11}
	We have $T(x)\gg x,$ where the implied constant depends only on $\alpha, \, r$.
\end{lem}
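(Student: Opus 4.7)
The plan is to isolate a positive-density subset of integers $n \leq x$ for which $X'_{nr}$ is forced to be exponentially large in $n$ (while $nr$ grows only linearly), so that the inequality $X'_{nr} > nr$ becomes automatic for $n$ past some threshold. To that end, I first extract from the proof of the preceding Theorem a sharper pointwise exponential lower bound on $X'_{nr}$. The chain leading up to equation \eqref{es} already gives $|X'_{nr} Y'_{nr}| \geq C\alpha^{2\phi(nr)}$, and Lemmas \ref{an} and \ref{ec} combine to yield $Y'_{nr} \leq Y_{nr} \ll_\epsilon B_{nr}^{2\epsilon} \leq \alpha^{2\epsilon n r}$. Dividing, for every $\epsilon > 0$,
$$ X'_{nr} \gg_\epsilon \alpha^{2(\phi(nr)-\epsilon n r)}. $$

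I then fix $\epsilon := c(r)/4$, with $c(r) > 0$ as in Lemma \ref{lim}, and consider
$$ A(x) := \Big\{ n \leq x : \frac{\phi(nr)}{nr} \geq \frac{c(r)}{2} \Big\}. $$
For $n \in A(x)$ one has $\phi(nr) - \epsilon n r \geq c(r)nr/4$, hence $X'_{nr} \gg \alpha^{c(r) n r /2}$, which exceeds the linear quantity $nr$ for all $n$ larger than some $n_1 = n_1(\alpha, r)$. To lower-bound $|A(x)|$ I would use Lemma \ref{lim} together with the trivial upper bound $\phi(nr)/(nr) \leq 1$:
$$ c(r) x + O(\log x) = \sum_{n\leq x} \frac{\phi(nr)}{nr} \leq |A(x)| + \frac{c(r)}{2}\big(x - |A(x)|\big), $$
which rearranges to $|A(x)| \geq \frac{c(r)}{2-c(r)}\,x + O(\log x) \gg x$. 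Therefore $T(x) \geq |A(x)| - n_1 \gg x$.

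The potentially delicate step is the first one: the form of the preceding Theorem as stated, $X'_{nr} \gg B_{\phi(r)}^{2(\phi(n)-\epsilon)}$, is already sufficient when $r \geq 3$ (so that $B_{\phi(r)} > 1$), but degenerates at $r = 2$, where $B_{\phi(r)} = B_1 = 1$ and the bound becomes trivial. The remedy is to appeal directly to the intermediate inequality $|X'_{nr}Y'_{nr}| \geq C\alpha^{2\phi(nr)}$ inside the Theorem's proof, whose base $\alpha > 1$ does not depend on $r$. Once that uniform exponential bound is available, the averaging via Lemma \ref{lim} is routine, and the implied constant in $T(x) \gg x$ depends only on $\alpha$ and $r$, as the statement requires.
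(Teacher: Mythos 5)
Your proof is correct and follows essentially the same route as the paper: both extract the pointwise bound $X'_{nr}\gg_\epsilon \alpha^{2(\phi(nr)-\epsilon nr)}$ from the intermediate inequalities of the preceding theorem, restrict to the positive-density set of $n$ with $\phi(nr)/(nr)$ bounded below by a fixed multiple of $c(r)$, and count that set via Lemma \ref{lim}; the only differences are the choice of constants ($c(r)/4$ and threshold $c(r)/2$ versus the paper's $c(r)/3$ and $2c(r)/3$) and a slightly rearranged averaging step. Your remark that one must bypass the degenerate stated form $X'_{nr}\gg B_{\phi(r)}^{2(\phi(n)-\epsilon)}$ when $r=2$ is well taken and is implicitly what the paper does by citing its intermediate equation rather than the theorem's conclusion.
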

\begin{proof}
	Let $R=\left\{ n:\, \phi(nr)>\textstyle\frac{2c(r)}{3}nr\right\}$ and $R(x)=|R\cap[1, \, x]|$.\\
	By equation \eqref{this} we have,
	\begin{equation}\label{fe}
		X^\prime_{nr}Y^\prime_{nr}\gg \alpha^{2\phi(nr)}.
	\end{equation}
	By using Lemmas \ref{an} and \ref{ec} we obtain,
	\begin{equation}\label{fs}
		Y^\prime_{nr}\leq Y_{nr}\ll_{\varepsilon} B^{2\varepsilon}_{nr}<(\alpha^{nr})^{2\varepsilon}.
	\end{equation}
	On substituting equation \eqref{fs} in \eqref{fe} we get,
	\begin{equation}\label{that}
		X^\prime_{nr}\gg_{\varepsilon} \alpha^{2(\phi(nr)-\varepsilon nr)}.
	\end{equation}
	Let $\varepsilon=\frac{c(r)}{3}$ in \eqref{that} and we get $ X^\prime_{nr}\gg_{r} \alpha^{2\big(\phi(nr)-\textstyle\frac{c(r)nr}{3}\big)}$. For any $n\in R$, we have $\phi(nr)>\frac{2c(r)nr}{3}$. Therefore,
	\begin{align*}
		\displaystyle X^\prime_{nr} &\gg_{r} \displaystyle \alpha^{2\big(\phi(nr)-\frac{c(r)nr}{3}\big)}> \alpha^{\frac{2c(r)nr}{3}}>nr.
	\end{align*}
	Therefore there exists an integer $n_0$ depending only on $\alpha, \, r$ such that, if $n\geq n_0$ and $n\in R$, then $X^\prime_{nr}>nr$. Hence we obtain,
	\begin{align*}
		T(x) &= \sum\limits_{\substack{n\leq x \\ X^\prime_{nr}>nr}}1\geq \sum\limits_{\substack{n\leq x \\ n\geq n_0 \\ n\in R }}1 
		= \sum\limits_{\substack{n\leq x \\ n\geq n_0 \\ \phi(nr)>2c(r)nr/3}}1. 
	\end{align*}
	Since we note that,
	\begin{equation}\label{f2}
		\sum\limits_{\substack{n\leq x \\ \phi(nr)\leq2c(r)nr/3}} \displaystyle\frac{\phi(nr)}{nr} \leq  \sum\limits_{\substack{n\leq x \\ \phi(nr)\leq2c(r)nr/3}} \displaystyle\frac{2c(r)}{3}
		\leq\displaystyle\frac{2c(r)}{3}x.
	\end{equation}
	Hence by Lemma \ref{lim} and equation \eqref{f2} we obtain,
	\begin{align*}
		T(x) &\geq \sum\limits_{\substack{n\leq x \\ n\geq n_0 \\ \phi(nr)>2c(r)nr/3}}1 \\
		&\gg \sum\limits_{\substack{n\leq x \\ \phi(nr)>2c(r)nr/3}}1 \\
		&\geq \sum\limits_{\substack{n\leq x \\ \phi(nr)>2c(r)nr/3}}\frac{\phi(nr)}{nr} \\
		&= \sum\limits_{\substack{n\leq x }}\frac{\phi(nr)}{nr}-\sum\limits_{\substack{n\leq x \\ \phi(nr)\leq2c(r)nr/3}}\frac{\phi(nr)}{nr}\\
		&\geq c(r)x+O(\log x)-\textstyle\frac{2c(r)}{3}x\gg_{\alpha, r} x.
	\end{align*}
	This completes the proof of Lemma \ref{l11}.
\end{proof}
\subsection{Proof of Theorem \ref{theorem2}}
The main idea of this theorem is to count number of primes $p$ such that $p$ divides $X^\prime_{nr}\leq x$. For any $n\in T$, it follows that there exists an odd prime $p_{n}$ such that $p_n|X^\prime_{nr}$ and $p_n\nmid nr$. Since $X^\prime_{nr}|X_{nr}$ and $p_n|X^\prime_{nr}$, by using Lemma \ref{bn} we obtain
$$
	B_{p_{n}-\legendre{8}{p_n}}\not\equiv 0 \Mod {p_n^2}.
$$
We note that $p_n|X^\prime_{nr}, \, X^\prime_{nr}|\Phi_{nr}(\alpha/\beta)$ and $p_n\nmid nr$. Therefore, by using Lemma \ref{rm} we obtain $p_{n}\equiv\pm1 \Mod {nr}$. Thus for any $n\in T$, there is a prime $p_{n}$ satisfying,
\begin{align*}
	B_{p_{n}-\legendre{8}{p_{n}}}&\not\equiv 0 \Mod { p_{n}^2}, \\
	p_{n} &\equiv \pm1 \Mod {nr}.
\end{align*}
By Lemma \ref{l8}, we get $p_n$ $(n\in T)$ are distinct primes. Thus we find that,
\begin{align*}
	\big|\big\{primes \, p\leq x\, : \,  p \equiv \pm1 \Mod r,  \,
	B_{p-\legendre{8}{p}}\not\equiv 0 \Mod { p^2}\big\}\big|
	&\geq \big|\big\{n\, :\, n\in T, \, X^\prime_{nr}\leq x\big\}\big|.
\end{align*}
Since $X^\prime_{nr}\leq X_{nr}\leq B_{nr}<\alpha^{nr}$, we write
\begin{align*}
	\big|\big\{n\, :\, n\in T, \, X^\prime_{nr}\leq x\big\}\big| &\geq \big|\{n\, :\, n\in T, \, \alpha^{nr}\leq x\}\big| \\
	&= \big|\big\{n\, :\, n\in T, \, n\leq \textstyle\frac{\log x}{r\log \alpha}\big \} \big|\\
	&= \textstyle T\big(\frac{\log x}{r \log \alpha}\big).
\end{align*}
Hence by Lemma \ref{l11} we conclude that,
\begin{align*}
	\big|\big\{primes \, p\leq \, x\, : \,  p \equiv \pm1 \Mod{r}, \,  
	B_{p-\legendre{8}{p}}\not\equiv 0\Mod{p^2} \big\}\big|
	&\geq \textstyle T\big(\frac{\log x}{r \log \alpha}\big) \\
	&\gg_{\alpha, r} \log x.
\end{align*}
\section{Application to Cryptography}\label{crypto}

\subsection{Balancing Matrices}
Ray \cite{raymat} introduced the balancing $Q_B-$matrix of order $2$ whose entries are first three balancing numbers $0, 1,$ and $6$ as follows:
\begin{align*}
	Q_{B} &=
	\begin{pmatrix}
		B_2 & -B_1\\
		B_1 & B_0
	\end{pmatrix}
=
	\begin{pmatrix}
	6 & -1\\
	1 & 0
\end{pmatrix}
\end{align*}
Without loss of generality, we interchange the diagonal elements,\\
i.e., 
\begin{align*}
	Q_{B_2} &=
	\begin{pmatrix}
		B_0 & -B_1\\
		B_1 & B_2
	\end{pmatrix}
	=
	\begin{pmatrix}
		0 & -1\\
		1 & 6
	\end{pmatrix}
\end{align*}

The $n^{th}$ power of the balancing $Q_{B_{2}}$- matrix is
\begin{align*}
	Q_{B_2}^n &=
	\begin{pmatrix}
	-B_{n-1} & -B_n\\
		B_n & B_{n+1}
	\end{pmatrix}
\end{align*}
with $n>0$.\\
The \textit{Cassini Formula} \cite{ray} for balancing number is 
\begin{equation}\label{cassini}
	B_n^2-B_{n+1}B_{n-1}=1.
\end{equation}
Thus $det(Q_{B_2}^n)=1$ and $Q_{B_2}^n$ is a non-singular matrix for all $n$. Therefore inverse must exist.
\begin{align*}
(Q_{B_2}^n)^{-1} =Q_{B_2}^{-n}	 &=
	\begin{pmatrix}
		 B_{n+1} &B_n\\
		-B_n &-B_{n-1}
	\end{pmatrix}
\end{align*}
We now extend the balancing $Q_B-$ matrix of order 3,
\begin{align*}
	Q_{B_3}^n &=
	\begin{pmatrix}
		-B_{n-1} & -B_{n} & 0\\
		B_ {n}& B_{n+1} & 0\\
		\displaystyle\sum_{t=0}^{n-1}B_t & \displaystyle\sum_{t=1}^{n}B_t & 1
	\end{pmatrix}
\quad \text{with} \quad
Q_{B_3}=
	\begin{pmatrix}
		0 & -1 & 0\\
		1 & 6 & 0\\
		0 & 1 & 1
	\end{pmatrix},
\end{align*}
where $n>0$.
By continuing this process, 
\begin{align*}
	Q_{B_s} =
	\begin{pmatrix}
		0 & -1 &  0 & 0\cdots  0  & 0 &0\\
		1 &  6 &  0 & 0 \cdots 0 & 0& 0\\
		0 &  1 &  1 & 0 \cdots 0  & 0&0\\
		0 &  0 &  1 & 1 \cdots  0& 0&0\\
		\vdots & \vdots & \vdots & \ddots&\ddots&\vdots\\
		0 & 0 & 0 & \cdots 1&1&  0\\
		0 & 0 & 0 & \cdots 0& 1 & 1
	\end{pmatrix}.
\end{align*}
The $det(Q_{B_s})=1$ guaranteed the existence of inverse.\\ Thus 
\begin{align*}
	Q_{B_s}^{-1} =
	\begin{pmatrix}
		6 & 1 &  0 & 0\cdots  0  & 0 &0\\
	   -1 &  0 &  0 & 0 \cdots 0 & 0& 0\\
		1 &  0 &  1 & 0 \cdots 0  & 0&0\\
		-1 &  0 &  -1 & 1 \cdots  0& 0&0\\
		\vdots & \vdots &\vdots &\ddots & \vdots&\vdots \\
		(-1)^{s-2} & 0 & (-1)^{s-4} & \cdots & 1 & 0\\
		(-1)^{s-1} & 0 & (-1)^{s-3} & \cdots & -1& 1
	\end{pmatrix}.
\end{align*}
Now the $n^{th}$ power of $Q_{B_s}$ is
\begin{align*}
	Q_{B_s}^n =
	\begin{pmatrix}
	M_1 & \textbf{0}\\
	M_2 & M_3
	\end{pmatrix}, 
\end{align*}
here $Q_{B_s}^n$ is a block matrix and its blocks are 
\begin{align*}
		M_1=
	\begin{pmatrix}
 -B_{n-1} & -B_{n} & 0 \\
	B_ {n}& B_{n+1} & 0 \\
	\displaystyle\sum_{t=0}^{n-1} B_t & \displaystyle\sum_{t=1}^{n}B_t & 1 
	\end{pmatrix},
\end{align*}

\begin{align*}
	M_2=
	\begin{pmatrix}
		\scriptstyle B_0(n-1)+B_1(n-2)+\cdots+B_{n-2} & \scriptstyle B_1(n-1)+B_2(n-2)+\cdots+B_{n-1} & n \\
		\vspace{0.3cm} \\
		\scriptstyle B_0\frac{(n-1)(n-2)}{2} +B_1\frac{(n-2)(n-3)}{2}\\+\cdots +\scriptstyle B_{n-3}	 &  \scriptstyle B_1\frac{(n-1)(n-2)}{2}+B_2\frac{(n-2)(n-3)}{2}+\cdots+B_{n-2} & \frac{n(n-1)}{2} \\
		\vspace{0.3cm}\\
		\scriptstyle  B_0\displaystyle\scriptstyle \sum_{n>4}\scriptstyle\frac{(n-3)(n-4)}{2}+B_1\displaystyle\scriptstyle \sum_{n>4}\scriptstyle\frac{(n-3)(n-4)}{2}\\+\cdots+\scriptstyle B_{n-4} & \scriptstyle B_1	\displaystyle\scriptstyle\sum_{n>3}\scriptstyle\frac{(n-2)(n-3)}{2}+ \cdots+B_{n-3} & 1+\textstyle\displaystyle\scriptstyle\sum_{n>2}\scriptstyle\frac{(n-1)(n-2)}{2} \\
		\vspace{0.3cm}\\
		\scriptstyle B_0\displaystyle\scriptstyle\sum_{n>4}\scriptstyle\frac{(n-2)(n-3)(n-4)}{3!}\\+\scriptstyle B_1\displaystyle\scriptstyle\sum_{n>5}\frac{(n-3)(n-4)(n-5)}{3!}+\cdots +\scriptstyle B_{n-5} & \scriptstyle B_1\displaystyle\scriptstyle\sum_{n>4}\scriptstyle\frac{(n-2)(n-3)(n-4)}{3!}+\cdots+B_{n-4} & 1+	\displaystyle\scriptstyle\sum_{n>3}\scriptstyle\frac{(n-1)(n-2)(n-3)}{3!} \\
		\vdots & \vdots & \vdots\\
		0 & 0 & 0
		\end{pmatrix}
\end{align*}

\begin{align*}
	M_3=
	\begin{pmatrix}
			1 &  0  &0 & 0& \cdots &0\\
			\vspace{0.1cm}\\
		n \choose1 & 1& 0 & 0 &\cdots & 0\\
		\vspace{0.1cm}\\
		n\choose2 & n\choose1 & 1&0 &\cdots & 0\\
		\vspace{0.1cm}\\
		n\choose3 & n\choose2 & n\choose 1& 1 & \cdots & 0\\
		\vdots &  \vdots &  \vdots & \vdots &\ddots &  \vdots &\\
		n \choose n-1 & n\choose n-2 & n \choose n-3& n \choose n-4& \cdots & 1
	\end{pmatrix}
\end{align*}
and $\textbf{0}$ is a zero matrix of order $3\times (s-3)$.\\
Now, the inverse of $Q_{B_s}^{n}$ is 
\begin{align*}
	Q_{B_s}^{-n} =
	\begin{pmatrix}
		M_1^{-1} & \textbf{0}\\
		M_2^{-1} & M_3^{-1}
	\end{pmatrix}, 
\end{align*}
where 
\begin{align*}
	M_1^{-1}=
	\begin{pmatrix}
		B_{n+1} & B_{n} & 0 \\
		-B_ {n}& -B_{n-1} & 0 \\
	\displaystyle\sum_{t=1}^{n}B_t	 & \displaystyle\sum_{t=0}^{n-1} B_t & 1 
	\end{pmatrix},
\end{align*}
\begin{align*}
	M_2^{-1}=
	\begin{pmatrix}
		-\left(\scriptstyle nB_1+B_2(n-1)+\cdots+B_{n}\right) & -\left(\scriptstyle nB_0+B_1(n-1)+\cdots+2B_{n-2}+B_{n-1}\right) &-n \\
		\vspace{0.3cm}\\
		\scriptstyle B_1\frac{n(n+1)}{2} +B_2\frac{(n-1)n}{2}\\+\cdots +\scriptstyle 3B_{n-1}+B_n	 & \scriptstyle B_0\frac{n(n+1)}{2} +B_1\frac{n(n-1)}{2}+\cdots+B_{n-1} & \frac{n(n+1)}{2} \\
		\vspace{0.3cm}\\
		- (\scriptstyle B_1\displaystyle\scriptstyle \sum_{1}^{n}\scriptstyle\frac{(n+1)!}{(n-1)!2!}+B_2\displaystyle\scriptstyle\sum_{2}^{n}\scriptstyle\frac{n!}{(n-2)!2!}\\+\cdots+\scriptstyle 4B_{n-1}+B_n ) & -\left(\scriptstyle B_0
		\displaystyle\scriptstyle\sum_{1}^{n}\frac{(n+1)n}{2} +\scriptstyle B_1 \sum_{2}^{n}\scriptstyle\frac{n(n-1)}{2}+\cdots+B_{n-1}\right)  & -\left(\textstyle\displaystyle\scriptstyle\sum_{1}^{n}\scriptstyle\frac{(n+1)n}{2}\right) \\
		\vspace{0.3cm}\\
		\scriptstyle B_1\displaystyle\scriptstyle\sum_{1}^n\scriptstyle\frac{(n+2)!}{(n-1)!3!}\\+\scriptstyle B_2\displaystyle\scriptstyle\sum_{2}^n\frac{(n+1)!}{(n-2)!3!}+\cdots +\scriptstyle B_{n} & \left(\scriptstyle B_0\displaystyle\scriptstyle\sum_{1}^{n}\scriptstyle\frac{(n+2)(n+1)n}{3!}+\cdots+B_{n-1}\right) & \left(\displaystyle\scriptstyle\sum_{1}^{n}\scriptstyle\frac{(n+2)(n+1)n}{3!}\right)	 \\
		\vdots & \vdots & \vdots\\
		\scriptstyle(-1)^{s-1}(\scriptstyle B_1\scriptstyle\sum_{1}^{n}\scriptstyle\frac{(n+s-5)!}{(n-1)!(s-4)!}+\\\scriptstyle B_2\scriptstyle\sum_{2}^{n}\scriptstyle\frac{(n+s-6)!}{(n-2)!(s-4)!}+\\\scriptstyle B_3\scriptstyle\sum_{3}^{n}\scriptstyle\frac{(n+s-7)!}{(n-3)!(s-4)!}+\cdots+B_n) & \scriptstyle(-1)^{s-1}\left(\scriptstyle B_0\scriptstyle\sum_{1}^{n}\scriptstyle\frac{(n+s-5)!}{(n-1)!(s-4)!}+\scriptstyle B_1\scriptstyle\sum_{2}^{n}\scriptstyle\frac{(n+s-6)!}{(n-2)!(s-4)!}+\cdots+\scriptstyle B_{n-1}\right) &\scriptstyle (-1)^{s-1}\binom{n+s-4}{s-3}
	\end{pmatrix},
\end{align*}
\begin{align*}
	M_3^{-1}=
	\begin{pmatrix}
		1 &  0  &0 & 0& \cdots &0\\
		\vspace{0.1cm}\\
		-\binom{n}{n-1} & 1& 0 & 0 &\cdots & 0\\
		\vspace{0.1cm}\\
		\binom{n+1}{n-1} & -\binom{n}{n-1} & 1&0 &\cdots & 0\\
		\vspace{0.1cm}\\
	-\binom{n+2}{n-1} & \binom{n+1}{n-1} & -\binom{n}{n-1} & 1 & \cdots & 0\\
		\vdots &  \vdots &  \vdots & \ddots &\ddots &  \vdots &\\
	(-1)^{s-1}\binom{s+n-2}{n-1} & (-1)^{s-2}\binom{s+n-3}{n-1} & (-1)^{s-3}\binom{s+n-4}{n-1} & (-1)^{s-4}\binom{s+n-5}{n-1} & \cdots & 1
	\end{pmatrix}
\end{align*}
\subsection{Key Exchange Technique}
Assume that Alice and Bob want to exchange a key. Bob (the receiver) generates the components of the public key $E_1$ and $E_2$ using his private key (session key) $G$. Thus, the public key is $pk(p, E_1, E_2)$. With the help of this public key $pk(p, E_1, E_2)$, the secret key $s$ can be calculated (see \ref{secret}). Then the key matrix $K$ can be constructed using the secret key $s$.
\subsubsection{Algorithm}\label{key}
\textit{Enciphering Stage:}\\
\begin{enumerate}
	\item Alice chooses secret number $e$ such that $1<e<p-1$.\\
	\item Signature: $k\leftarrow E_1^e \Mod{p}$.\\
	\item Secret key: $s\leftarrow E_2^e\Mod{p}$.\\
	\item Key matrix: $K\leftarrow Q_{B_s}^k,$ where $Q_{B_s}^k$ is a generalized balancing matrix of order $s\times s$. \\
	\item Encryption: $Enc(A): \quad C_i\leftarrow(A_iK+B) \Mod{p}$.\\
	\item Exchange $(k, C)$ to Bob.
\end{enumerate}
\textit{Deciphering Stage:}\\
Bob after obtaining $(k, C)$, 
\begin{enumerate}
	\item Secret Key: $s\leftarrow k^G \Mod{p}$, where $G$ is Bob's secret key.\\
	\item Key Matrix: $K\leftarrow Q_{B_s}^k$.\\
	\item Decryption: $Dec(C): \quad A_i\leftarrow (C_i-B)K^{-1} \Mod{p}$.
	
\end{enumerate}
\subsection{Numerical Example}
Assuming Alice wants to send a message to Bob, she would first compute the key matrix $K$ with the help of the above algorithm \ref{key} and then encrypt the plaintext $A$ by using key matrix $K=Q_{B_s}^k$.
\begin{exmp}
Let $p=31$ and let Bob's private key $G=17$. Suppose that Alice wants to send a plaintext $A=$ \textbf{WELCOMEANNIE}.
Bob chooses (primitive root modulo $31$) $g$ as $3$. Assume that $g=E_1$ and computes $E_2\equiv E_1^{G} \Mod{p}\equiv3^{17} \Mod{31}=22$. Thus Bob's public key $pk(p, E_1, E_2)=(31, 3,22)$ and secret key $G=17$.\\
\textit{Enchipering Stage:}\\
The plaintext $A$ is WELCOMEANNIE and shifting vector is 
\begin{align*}
	B=
	\begin{pmatrix}
		37 & 17 & 19 & 13	
	\end{pmatrix},
\end{align*}
At first, Alice chooses $e=24$ and computes signature $k\equiv 3^{24} \Mod{31}\equiv2 \Mod{31}$.\\
The secret key $s\equiv E_2^{e}\Mod{p}\equiv 22^{24}\Mod{31}\equiv 4 \Mod{31}$. The key matrix $K$ can be constructed using aforementioned data with help of generalized balancing matrix $Q_{B_s}^{k}$.\\
i.e.,
\begin{align*}
K=Q_{B_4}^2 &=
	\begin{pmatrix}
		-1 & -6 & 0 & 0\\
		6 & 35 & 0 & 0\\
		1 & 7 & 1 & 0\\
		0 & 1 & 2 & 1
	\end{pmatrix}.
\end{align*}
The cipher text $C\leftarrow AK+B$. \\
Now, the plaintext $A$ can be divided into blocks.\\
i.e.,
\begin{align*}
	A_1=
	\begin{pmatrix}
		22 & 4 & 11 & 2
	\end{pmatrix};
\quad
A_2=
	\begin{pmatrix}
	14 & 12 & 4 & 0
\end{pmatrix};
\quad
A_3=
\begin{pmatrix}
	13 & 13 & 8 & 4
\end{pmatrix}.
\end{align*}
\begin{align*}
	C_1 &\equiv A_1K+B \Mod{31}\\
	&\equiv\left(\begin{pmatrix}
		22 & 4 & 11 & 2
	\end{pmatrix}
	\begin{pmatrix}
		-1 & -6 & 0 & 0\\
		6 & 35 & 0 & 0\\
		1 & 7 & 1 & 0\\
		0 & 1 & 2 & 1
	\end{pmatrix}+
 \begin{pmatrix}
 	37 & 17 & 19 & 13
 \end{pmatrix}\right)\Mod{31}\\
&\equiv\begin{pmatrix}
	19 & 11 & 3 & 15
\end{pmatrix}\sim
\begin{pmatrix}
	T & L & D & P
\end{pmatrix}
\end{align*}
\begin{align*}
	C_2 &\equiv A_2K+B \Mod{31}\\
	&\equiv\left(\begin{pmatrix}
		14 & 12 & 4 & 0
	\end{pmatrix}
	\begin{pmatrix}
		-1 & -6 & 0 & 0\\
		6 & 35 & 0 & 0\\
		1 & 7 & 1 & 0\\
		0 & 1 & 2 & 1
	\end{pmatrix}+
	\begin{pmatrix}
		37 & 17 & 19 & 13
	\end{pmatrix}\right)\Mod{31}\\
	&\equiv\begin{pmatrix}
		6 & 9 & 23 & 13
	\end{pmatrix}\sim
\begin{pmatrix}
	G& J& X & N
\end{pmatrix}
\end{align*}
\begin{align*}
	C_3 &\equiv A_3K+B\Mod{31}\\
	&\equiv\left(\begin{pmatrix}
		13 & 13 & 8 & 4
	\end{pmatrix}
	\begin{pmatrix}
		-1 & -6 & 0 & 0\\
		6 & 35 & 0 & 0\\
		1 & 7 & 1 & 0\\
		0 & 1 & 2 & 1
	\end{pmatrix}+
	\begin{pmatrix}
		37 & 17 & 19 & 13
	\end{pmatrix}\right)\Mod{31}\\
	&\equiv\begin{pmatrix}
		17 & 20 & 4 & 17
	\end{pmatrix}\sim
	\begin{pmatrix}
		R& U& E & R
	\end{pmatrix}
\end{align*}
Thus the cipher text $C=(C_1C_2C_3)=(TLDPGJXNRUER)$. Alice now send this cipher text to bob with her signature.\\
\textit{Deciphering Stage:}\\
Bob can calculate decryption key $K^{-1}$ with the the help of secret key $G$ and the signature $(k, B)=(2, (37\quad 17\quad 19\quad 13))$.\\
Thus
\begin{align*}
	s &\equiv k^{G}\Mod{p}\\
	&\equiv 2^{17} \Mod{31}\\
	&\equiv 4 \Mod{31}.
\end{align*}
\begin{align*}
	K^{-1}=Q_{B_4}^{-2}=
		\begin{pmatrix}
		35 & 6 & 0 & 0\\
		-6 & -1 & 0 & 0\\
		7 & 1 & 1 & 0\\
		-8 & -1 & -2 & 1
	\end{pmatrix}
\end{align*}
Now decryption is $A\leftarrow(C-B)K^{-1} \Mod{p}$
\begin{align*}
	A_1 &\equiv (C_1-B)K^{-1} \Mod{31}\\
	&\equiv\left(\begin{pmatrix}
		19 & 11 & 3 & 15
	\end{pmatrix}-
\begin{pmatrix}
		37 & 17 & 19 & 13
\end{pmatrix}
	\begin{pmatrix}
		35 & 6 & 0 & 0\\
		-6 & -1 & 0 & 0\\
		7 & 1 & 1 & 0\\
		-8 & -1 & -2 & 1
	\end{pmatrix}
\right)\Mod{31}\\
	&\equiv\begin{pmatrix}
		22 & 4 & 11 & 2
	\end{pmatrix}\sim
	\begin{pmatrix}
		W & E & L & C
	\end{pmatrix}
\end{align*}
\begin{align*}
	A_2 &\equiv (C_2-B)K^{-1} \Mod{31}\\
	&\equiv\left(\begin{pmatrix}
		6 & 9 & 23 & 13
	\end{pmatrix}-
	\begin{pmatrix}
	35 & 6 & 0 & 0\\
	-6 & -1 & 0 & 0\\
	7 & 1 & 1 & 0\\
	-8 & -1 & -2 & 1
\end{pmatrix}
\right)\Mod{31}\\
	&\equiv\begin{pmatrix}
		14 & 12 & 4 & 0
	\end{pmatrix}\sim
	\begin{pmatrix}
		O& M& E & A
	\end{pmatrix}
\end{align*}
\begin{align*}
	A_3 &\equiv (C_3-B)\Mod{31}\\
	&\equiv\left(\begin{pmatrix}
		17 & 20 & 4 & 17
	\end{pmatrix}-
\begin{pmatrix}
	37 & 17 & 19 & 13
\end{pmatrix}
		\begin{pmatrix}
		35 & 6 & 0 & 0\\
		-6 & -1 & 0 & 0\\
		7 & 1 & 1 & 0\\
		-8 & -1 & -2 & 1
	\end{pmatrix}
	\right)\Mod{31}\\
	&\equiv\begin{pmatrix}
		13 & 13 & 8 & 4
	\end{pmatrix}\sim
	\begin{pmatrix}
		N& N& I & E
	\end{pmatrix}
\end{align*}
Hence, Bob decipher the plaintext \textbf{WELCOMEANNIE}.
\end{exmp}
\subsection{Strength and security analysis}
 We are working in the special linear group $SL_s(F_p)$ of degree $s$ over a field $F_p$, which consists of all invertible matrices of order $s\times s$ over $F_p$. Thus the order of $SL_s(F_p)$ is 
$$
1/(p-1)(p^s-1)(p^s-p)(p^s-p^2)\dots(p^s-p^{s-1}).
$$
It is more difficult to break the scheme due to the order of the key matrix and its large power $k$.
\begin{exmp}
	Consider the key matrix $K=Q^{17}_{B_{15}}$ over a field $F_{31}$. That is, $s=15$ and $p=31$. Thus the cardinality of invertible matrices with determinant $1$ of order $15\times 15$ over $F_{31}$ is
	\begin{align*}
		|SL_{15}(F_{31})|&=1/(31-1)(31^{15}-1)(31^{15}-31)(31^{15}-31^2)\dots(31^{15}-31^{14})\\
		&=1.160251664216324177237764\times 10^{334}.
	\end{align*}
In this case, we need to check $10^{334}$ invertible matrices with determinant $1$. It is impossible, if we deal with huge order $s$.
\end{exmp}
\begin{rem}
	The time complexity of matrix multiplication in worst case is $\mathcal{O}(n^3)$, where $\mathcal{O}$ represents big $\mathcal{O}$ notation \cite{koblitz}. But in the case of generalized balancing matrices the time complexity reduces to $\mathcal{O}(n)$.
\end{rem}
\section{Conclusions}
In this paper, we consider balancing sequences and prove that under the assumption of the $abc$ conjecture for the number field $\mathbb{Q}(\sqrt{2})$, there are at least O(log x) as many balancing non-Wieferich primes $p$ such that $p\equiv\pm1\Mod{r}$ for any fixed integer $r\geq2$.

Further, we suggest a public key cryptosystem using Affine-Hill cipher and generalized balancing matrix $Q_{B_s}^{k}$ with a large power $k$. We propose a key formation (i.e., exchange of the key matrix $K=Q_{B_s}^{k}$ of order $s\times s$ for the encryption-decryption scheme with the help of balancing sequences under prime modulo). In this scheme, instead of exchanging key matrix, we simply exchange a pair of numbers $(s,k)$, which results in a wide-key space and lower time and space complexity.

\medskip
\noindent
\textbf{Acknowledgement.}
	The author I. Mumtaj Fathima would like to express her gratitude to Maulana Azad National Fellowship for minority students, UGC. This research work is supported by MANF-2015-17-TAM-56982, University Grants Commission (UGC), Government of India.
	
	

\begin{thebibliography}{SS}
	\bibitem{behera}A. Behera, G. K. Panda, \textit{On the square roots of triangular numbers,} Fibonacci Quart. , \textbf{37} (1999), 98-- 105.
	\bibitem{bilu}Yu. Bilu, G. Hanrot, P. M. Voutier (with an appendix by M. Mignotte), \textit{Existence of primitive divisors of Lucas and Lehmer numbers,} J. Reine Angew. Math. , \textbf{539} (2001), 75--122.
	\bibitem{chending}Y.-G. Chen, Y. Ding, \textit{Non-Wieferich primes in arithmetic progressions,} Proc. Amer. Math. Soc. , \textbf{145} (2017), 1833--1836.
	\bibitem{ding}Y. Ding, \textit{Non-Wieferich primes under the $abc$ conjecture,} C. R. Math. Acad. Sci. Paris, \textbf{357} (2019), no. 6, 483-- 486.
	\bibitem{dutta}U. K. Dutta, B. K. Patel, P. K. Ray, \textit{Balancing non-Wieferich primes in arithmetic progressions,} Proc. Indian Acad. Sci. (Math. Sci.) , \textbf{129} (2019), no. 2, Paper No. 21.
	\bibitem{elgamal1}T. ElGamal, \textit{A public key cryptosystem and signature scheme based on discrete logarithms}, IEEE Transactions on Information Theory, \textbf{31} (1985), 469--472. 
	\bibitem{elgamal2}T. ElGamal, \textit{A public key cryptosystem and a signature scheme based on discrete logarithms in advances in Cryptology-CRYPTO '84}, Springer-Verlag, Berlin, LNCS 196 (1985), 10--18.
	\bibitem{graves}H. Graves, M. Ram Murty, \textit{The $abc$ conjecture and non-Wieferich primes in arithmetic progressions,} J. Number Theory. , \textbf{133} (2013), 1809--1813.
	\bibitem{gyo}K. Gy\H{o}ry, \textit{On the $abc$ conjecture in algebraic number fields,} Acta Arith. , \textbf{133} (2008), no. 3, 281--295.
	\bibitem{koblitz}N. Koblitz, \textit{A course in number theory and cryptography}, 2nd ed., Springer-Verlag, New York, 1994.
	\bibitem{masser}D. W. Masser, \textit{Open problems. In: W. W. L. Chen (ed.) Proceedings of the Symposium on analytic number theory,} Imperial College, London, 1985.
\bibitem{oesterle}J. Oesterl\'{e}, \textit{Nouvelles approches du "th\'{e}or\'{e}me" de Fermat (French),}  S\'{e}minaire Bourbaki, Ast\'{e}risque No. 161-162 (1988), Exp. No. 694, 165--186.
	\bibitem{paro}G. K. Panda, S. S. Rout, \textit{Periodicity of balancing numbers,} Acta Math. Hungar. , \textbf{143} (2014), no. 2, 274--286.
	\bibitem{prasad}K. Prasad, H. mahato, \textit{Cryptography using generalized Fibonacci matrices with Afine-Hill cipher,} Journal of Discrete Mathematical Sciences and Cryptography, (2021), 1--12. \url{https://doi.org/10.1080/09720529.2020.1838744}
	\bibitem{ram}M. Ram Murty, \textit{Problems in analytic number theory.} 2nd ed. , Graduate texts in mathematics, 206, Springer, New York, 2008.
	\bibitem{raymat}P. K. Ray, \textit{Certain matrices associated with balancing and Lucas-balancing numbers,} Matematika, \textbf{28} (2012), no. 1, 15--22.
	\bibitem{ray}P. K. Ray, G. K. Dial, B. K. Patel, \textit{Application of some recurrence relations to cryptography using finite state machine}, International Journal of Computer Science and Electronics Engineering, \textbf{2} (2014), no. 4, 220--223.
	\bibitem{rout}S. S. Rout, \textit{Balancing non-Wieferich primes in arithmetic progression and $abc$ conjecture,} Proc. Japan Acad. Ser. A Math. Sci. , \textbf{92} (2016), no. 9, 112--116.
	\bibitem{sman}J. H. Silverman, \textit{Wieferich's criterion and the abc-conjecture,} J. Number Theory. , \textbf{30} (1988), 226--237.
	\bibitem{stewart} C. L. Stewart, On divisors of Fermat, Fibonacci, Lucas and Lehmer numbers, III, J. London Math. Soc. , \textbf{28}
    (1983), no. 2, 211--217.	
\bibitem{sundarayya}P. Sundarayya, G. V. Prasad, \textit{A public key cryptosystem using affine Hill cipher under modulation of prime number}, Journal of information \& optimization Sciences, \textbf{40} (2019), no. 4, 919--930.

\bibitem{viswanath}M. K. Viswanath, M. Ranjith Kumar, \textit{A public key cryptosystem using Hill's cipher}, Journal of Discrete Mathematical Sciences and Cryptography, \textbf{18} (2015), no. 1-2, 129--138.
	\bibitem{vojta}P. Vojta, \textit{Diophantine approximations and value distribution theory.} Lecture notes in mathematics, 1239, Springer, Berlin, 1987.
	\bibitem{wang}Y. Wang, Y. Ding, \textit{A note on balancing non-Wieferich primes,} J. Anhui Norm. Univ. Nat. Sci. , \textbf{43} (2020), no. 2, 129--133.
	\bibitem{wief}A. Wieferich, \textit{Zum letzten Fermat'schen Theorem (German),} J. Reine Angew. Math. , \textbf{136} (1909), 293--302.
	\end{thebibliography}
\end{document}